\definecolor{bleudefrance}{rgb}{0.19, 0.55, 0.91}
\definecolor{ao(english)}{rgb}{0.0, 0.5, 0.0}
\newcommand{\addcite}[0]{\ifthenelse{\boolean{showcomments}}
{\textcolor{purple}{(add cite(s)) }}{}}%
\newcommand{\enrique}[1]{  \ifthenelse{\boolean{showcomments}}
{\todo[inline,color=bleudefrance]{Enrique: #1}}{}}
\newcommand{\emmargin}[1]{\ifthenelse{\boolean{showcomments}}{\marginpar{\color{bleudefrance}\tiny EM: #1}}{}}
\newcommand{\aem}[1]{
\ifthenelse{\boolean{showedits}}
{\added[id=EM]{#1}}
{\!#1\hspace{-4.75pt}}
}
\newcommand{\repem}[2]{
\ifthenelse{\boolean{showedits}}
{\replaced[id=EM]{#1}{#2}}
{\!#1\hspace{-4.75pt}}
}
\newcommand{\dem}[1]{
\ifthenelse{\boolean{showedits}}
{\deleted[id=EM]{#1}}
{}
}
\def\ba{\begin{align}}
	\def\ea{\end{align}}
\newcommand{\beq}{\begin{equation}}
	\newcommand{\eeq}{\end{equation}}
\newcommand{\bq}{\begin{eqnarray}}
	\newcommand{\eq}{\end{eqnarray}}
\newcommand{\bqn}{\begin{eqnarray*}}
	\newcommand{\eqn}{\end{eqnarray*}}
\newcommand{\bee}{\begin{enumerate}}
	\newcommand{\eee}{\end{enumerate}}
\newcommand{\bi}{\begin{itemize}}
	\newcommand{\ei}{\end{itemize}}
\newcommand{\bseq}{\begin{subequations}}
\newcommand{\eseq}{\end{subequations}}
\newcommand{\PreserveBackslash}[1]{\let\temp=\\#1\let\\=\temp}
\newcolumntype{C}[1]{>{\PreserveBackslash\centering}p{#1}}
\newcolumntype{R}[1]{>{\PreserveBackslash\raggedleft}p{#1}}
\newcolumntype{L}[1]{>{\PreserveBackslash\raggedright}p{#1}}
\newcommand{\R}{\mathbb{R}}
\newcommand{\norm}[1]{\|#1\|}
\newcommand{\dd}[1]{\frac{\partial}{\partial #1}}
\newtheorem*{remark}{Remark}
\newtheorem{theorem}{{Theorem}}
\newtheorem{lemma}[theorem]{{Lemma}}
\newtheorem{definition}{{Definition}}
\newtheorem{corollary}[theorem]{{Corollary}}
\newtheorem{proposition}[theorem]{{Proposition}}
\newtheorem{assumption}{{Assumption}}
\title{\Large \bf
Saddle Flow Dynamics: Observable Certificates and Separable Regularization
}
\author{Pengcheng You and Enrique Mallada
\thanks{Pengcheng You and Enrique Mallada are with the Department of Electrical and Computer Engineering, Johns Hopkins University, Baltimore, MD 21218, US.
        {\tt \{pcyou,mallada\}@jhu.edu}}%
\thanks{The work was supported by NSF through grants CNS 1544771, EPCN 1711188, AMPS 1736448, and CAREER 1752362.}
}
\begin{document}

\maketitle
\thispagestyle{empty}
\pagestyle{empty}

\begin{abstract}

This paper proposes a certificate, rooted in observability, for asymptotic convergence of saddle flow dynamics of convex-concave functions to a saddle point. This observable certificate directly bridges the gap between the invariant set and the equilibrium set in a LaSalle argument, and generalizes conventional conditions such as strict convexity-concavity and proximal regularization.
We further build upon this certificate to propose a separable regularization method for saddle flow dynamics that makes minimal requirements on convexity-concavity and yet still guarantees asymptotic convergence to a saddle point. Our results generalize to saddle flow dynamics with projections on the vector field and have an immediate application as a distributed solution to linear programs.

\end{abstract}


\section{Introduction}

There has been increasing interest in studying optimization algorithms from a dynamical systems view-point as means to understand their stability \cite{holding2020stability_a,holding2020stability_b}, rate of convergence \cite{wibisono2016variational,francca2019gradient,mohammadi2020robustness}, and robustness \cite{mohammadi2020robustness,lessard2016analysis,richert2015robust,cherukuri2017role}. For example, in the basic case of gradient descent dynamics for unconstrained convex optimization, the objective function monotonically decreases along trajectories towards the optimum, naturally rendering a Lyapunov function~\cite{khalil2002nonlinear}. Such realization later on leads to multiple extensions, including finite-time convergence~\cite{cortes2006finite,garg2020fixed}, acceleration~\cite{mohammadi2020robustness,francca2019gradient}, and time-varying optimization~\cite{rahili2016distributed,fazlyab2017prediction}.

One prominent branch of this line of work is the study of saddle flow dynamics, i.e., dynamics in the gradient descent direction on a sub-set of variables and the gradient ascent direction on the rest. Designed for locating min-max saddle points, saddle flow dynamics are particularly suited
for solving constrained optimization problems via primal-dual methods \cite{cherukuri2016asymptotic}, and finding Nash equilibria of zero-sum games \cite{gharesifard2013distributed}, which lead to a broad application spectrum, including power systems \cite{zhao2014design}, communication networks \cite{chiang2007layering}, and cloud computing \cite{goldsztajn2019proximal}.

The study of saddle flow dynamics is rooted in the seminal work \cite{arrow1958studies}, which first considers asymptotic behavior of saddle flows in the context of primal-dual algorithms. This has received renewed attention over the last decade. 
In particular, conventional conditions for asymptotic convergence are re-validated with advanced analytical tools in more general setups. For instance, \cite{cherukuri2016asymptotic} revisits the condition of strict convexity-concavity in the case of discontinuous vector fields, using LaSalle's invariance principle for discontinuous Caratheodory systems.

More recent analysis has been centered around asymptotic convergence of saddle flow dynamics under weaker conditions. A set of literature establishes stronger results that require only local strong convexity-concavity \cite{cherukuri2017role}, convexity-linearity, or strong quasiconvexity-quasiconcavity \cite{cherukuri2017saddle}. 
Further, asymptotic oscillatory behavior of saddle flow dynamics for general convex-concave functions and their $\Omega$ limit sets are explicitly characterized in \cite{holding2020stability_a,holding2020stability_b}.

Another active topic is to circumvent the above conditions via regularization, which is an appealing approach to handling the Lagrangian of constrained convex/linear optimization. This includes various penalty terms on equality constraints or even projected inequality constraints \cite{cherukuri2017distributed,richert2015robust}, as well as the proximal method \cite{goldsztajn2019proximal,Goldsztajn2020proximal}. The rationale of regularization is further interpreted in \cite{yamashita2020passivity} in the frequency domain.
Despite the merit of regularization that relaxes conditions for convergence, the extra penalty terms commonly introduce couplings that may require additional computation and communication overheads in order to realize distributed implementation.

Our work complements the existing literature by first obtaining a sufficient certificate for asymptotic convergence of saddle flow dynamics of a convex-concave function. The underlying rationale still adopts the standard quadratic Lyapunov function and LaSalle's invariance principle, but directly connects the invariant set and the equilibrium set through the existence of a certificate with certain observability properties. In this way, the asymptotic convergence to an equilibrium, i.e., a saddle point, is immediately available.
Our observable certificate 
is weaker than conventional conditions, e.g., strict convexity-concavity and proximal regularization.

We further exploit this certificate to develop a novel separable regularization method where only minimal convexity-concavity is required to establish convergence, a significantly milder condition than most existing ones in the literature, that includes bi-linear saddle functions as a special case. More importantly, the introduced regularization terms are separable and local, thus consistently preserving any distributed structure that original systems may have.
The certificate can be generalized to accommodate projections on the vector field and, as a result, allows us to apply our separable regularization to designing distributed solvers for linear programs.

The remainder of the paper is organized as follows.
Section~\ref{sec:saddle-flows} introduces the problem formulation with basic definitions and assumptions, followed by the key results on asymptotic convergence of saddle flow dynamics in Section~\ref{sec:asymp_convergece}.
We further generalize the results to the projected cases in Section~\ref{sec:proj_saddle_flow_dynamics}.
Section~\ref{sec:simulation} provides simulation validations and Section~\ref{sec:conclusion} concludes.

\subsubsection*{Notation}
Let $\mathbb{R}$ and $\mathbb{R}_{\ge 0}$ be the sets of real and non-negative real numbers, respectively.
Given two vectors $x,y\in\R^n$, $x_i$ and $y_i$ denote their $i^{\rm th}$ entries, respectively; and $x \le y$ holds if $x_i \le y_i$ holds for $\forall i$. 
Given a continuously differentiable function $S(x,y)\in\mathcal C^1$ with $S:\R^n\times \R^m\rightarrow \R$, we use $\dd{x}S(x,y)\in \R^{1\times n}$ and $\dd{y}S(x,y)\in \R^{1\times m}$ to denote the partial derivatives with respect to $x$ and $y$, respectively. We further define $\nabla_x S(x,y)=\left[\dd{x}S(x,y)\right]^T$.

\section{Problem Formulation}\label{sec:saddle-flows}

We consider a function $S(x,y)$ with $S:\R^{n} \times \R^m \rightarrow\R$. Our goal is to study different dynamic laws that seek to converge to some saddle point $(x_\star, y_\star)$ of $S(x,y)$.
\begin{definition}[Saddle Point]\label{def:saddle-point}
A point $(x_\star,y_\star)\in\R^{n\times m}$ is a saddle point of a function $S(x,y)$ if 
\begin{equation}\label{eq:saddle-critical}
   \left\{
   \begin{aligned}
   \nabla_x S(x_\star,y_\star) & =0 \\
   \nabla_y S(x_\star,y_\star) & =0 
   \end{aligned} \right.
\end{equation}
holds, and $S(x_\star,y_\star)$ is not a local extremum.
\end{definition}

While in general, such questions could be asked on a setting without any further restrictions, neither the existence of saddle points nor convergence towards them is easy to guarantee.
For the purpose of this paper, we focus our attention on functions $S(x,y)$ that are \emph{convex-concave}.
\begin{definition}[Convex-Concave Functions]\label{def:convex-concave}
A function $S(x,y)$ is convex-concave if and only if $S(\cdot,y)$ is convex for $\forall y\in R^m$ and $S(x,\cdot)$ is concave for $\forall x\in\R^n$.
A function $S(x,y)$ is strictly convex-concave if and only if $S(x,y)$ is convex-concave and either $S(\cdot,y)$ is strictly convex for $\forall y\in R^m$ or $S(x,\cdot)$ is strictly concave for $\forall x\in\R^n$.
\end{definition}

Due to the convexity-concavity of $S(x,y)$, \eqref{eq:saddle-critical} implies that indeed any of its saddle points $(x_\star,y_\star)$ can be characterized by
\begin{equation}\label{eq:saddle-inequality}
    S(x_\star,y)\leq S(x_\star,y_\star) \leq S(x,y_\star)\,, 
\end{equation}
for $\forall x\in \R^n$ and $\forall y\in \R^m$.
Therefore, we are specifically interested in minimizing $S(x,y)$ over $x$ and meanwhile maximizing $S(x,y)$ over $y$.

Throughout this work, we will assume that such a point $(x_\star,y_\star)$ does exist and that $S(x,y)$ is continuously differentiable, i.e., $S(x,y)\in \mathcal C^1$, as formally summarize below.

\begin{assumption}\label{ass:paper-assumption}
$S(x,y)$ is convex-concave, continuously differentiable, and there exists at least one saddle point $(x_\star,y_\star)$ satisfying \eqref{eq:saddle-inequality}.
\end{assumption}

The continuous differentiability in Assumption \ref{ass:paper-assumption} is introduced to simplify the exposition. It does not significantly limit the scope of the results as one can always derive a continuously differentiable surrogate of a continuous convex/concave function by means of the Moreau Envelope \cite{parikh2014proximal}.


Given a convex-concave function $S(x,y)$ satisfying Assumption~\ref{ass:paper-assumption}, we refer to the following dynamic law
\begin{subequations}\label{eq:saddle-flow}
\begin{align}
    \dot x &= -\,\nabla_x S(x,y)\,,\label{eq:p-saddle-flow}\\
    \dot y &= +\,\nabla_y S(x,y)\,, \label{eq:d-saddle-flow}
\end{align}
\end{subequations}
as the saddle flow dynamics of $S(x,y)$. 
Due to convexity-concavity, a point is
an equilibrium of \eqref{eq:saddle-flow} if and only if it is a saddle point of $S(x,y)$, and the dynamic law drives the system towards such points in the directions of gradient descent and ascent, respectively, for $x$ and $y$. 
We will mainly work with this standard form of saddle flow dynamics to locate a saddle point of $S(x,y)$.


\section{Asymptotic Convergence}\label{sec:asymp_convergece}

In this section we present an observable certificate that guarantees asymptotic convergence of the saddle flows dynamics \eqref{eq:saddle-flow} to a saddle point of $S(x,y)$.
We show that two conventional conditions of strict convexity-concavity and proximal regularization satisfy this certificate as special cases.
We further build on this certificate to develop a separable regularization method that entails minimal convexity-concavity requirements on $S(x,y)$ for saddle flow dynamics to asymptotically converge to a saddle point. 

\subsection{Observable Certificates}\label{ssec:general_principle}

We now describe the proposed observable certificate for the saddle flow dynamics \eqref{eq:saddle-flow} to asymptotically converge to a saddle point of $S(x,y)$.
\begin{definition}[Observable Certificate]
\label{def:certificate}
A function $h(x,y)$ with $h:\mathbb{R}^n \times \mathbb{R}^m \rightarrow \mathbb{R}_{\ge0}^2$ is an observable certificate of $S(x,y)$, if and only if there exists a saddle point $(x_\star,y_\star)$ such that
\begin{equation}\label{eq:bounded-auxiliary-function}
    \begin{bmatrix}
    S(x_\star,y_\star) - S(x_\star,y)\\
    S(x,y_\star) - S(x_\star,y_\star) 
    \end{bmatrix}\ge h(x,y) \ge 0 
\end{equation}
holds and for any trajectory $(x(t),y(t))$ of \eqref{eq:saddle-flow} that satisfies $h(x(t),y(t)) \equiv 0$, we have $\dot x, \dot y \equiv 0$.
\end{definition}
\begin{remark}
We call $h(x,y)$ an observable certificate, due the second property of Definition \ref{def:certificate}, which is akin to \eqref{eq:saddle-flow} having $h(x,y)$ as an observable output. It is exactly this observability property that will allow us to connect invariant sets with saddle-points.
\end{remark}

\begin{assumption}~\label{ass:auxiliary-function}
$S(x,y)$ has an observable certificate $h(x,y)$ as given by Definition \ref{def:certificate}.
\end{assumption}
Checking whether Assumption~\ref{ass:auxiliary-function} holds basically requires hunting for a qualified observable certificate $h(x,y)$ of $S(x,y)$.
Under this assumption, asymptotic convergence of the saddle flow dynamics \eqref{eq:saddle-flow} is formally stated below.
\begin{theorem}[Sufficiency of Observable Certificates]\label{th:general-principle}
Let Assumptions~\ref{ass:paper-assumption} and \ref{ass:auxiliary-function} hold. Then the saddle flow dynamics \eqref{eq:saddle-flow} asymptotically converge to some saddle point $(x_\star,y_\star)$ of $S(x,y)$. 
\end{theorem}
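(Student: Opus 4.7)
The plan is to use a standard quadratic Lyapunov function together with LaSalle's invariance principle, where the observable certificate plays the decisive role in identifying the largest invariant set with the set of saddle points.

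First, I fix a saddle point $(x_\star,y_\star)$ for which \eqref{eq:bounded-auxiliary-function} holds, and consider the candidate Lyapunov function
\[
V(x,y) = \tfrac{1}{2}\|x-x_\star\|^2 + \tfrac{1}{2}\|y-y_\star\|^2,
\]
which is radially unbounded and non-negative, with $V=0$ exactly at $(x_\star,y_\star)$. Differentiating along trajectories of \eqref{eq:saddle-flow} gives
\[
\dot V = -(x-x_\star)^T \nabla_x S(x,y) + (y-y_\star)^T \nabla_y S(x,y).
\]
Invoking convexity of $S(\cdot,y)$ and concavity of $S(x,\cdot)$ via the first-order characterizations yields the standard bound
\[
\dot V \le S(x_\star,y) - S(x,y_\star) = -\bigl[S(x_\star,y_\star)-S(x_\star,y)\bigr] - \bigl[S(x,y_\star)-S(x_\star,y_\star)\bigr],
\]
where both bracketed terms are non-negative by the saddle-point inequality \eqref{eq:saddle-inequality}. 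Applying the observable certificate bound \eqref{eq:bounded-auxiliary-function} then gives $\dot V \le -\mathbf{1}^T h(x,y) \le 0$.

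Next, since $V$ is non-increasing along trajectories and has compact sublevel sets, every solution is bounded, and LaSalle's invariance principle applies. Trajectories converge to the largest invariant set contained in $\{(x,y) : \dot V(x,y)=0\} \subseteq \{(x,y) : h(x,y)=0\}$. The observability property of $h(x,y)$ in Definition \ref{def:certificate} is precisely what I need here: any trajectory remaining in $\{h=0\}$ must satisfy $\dot x,\dot y \equiv 0$, so the largest invariant set consists entirely of equilibria of \eqref{eq:saddle-flow}, which, by convexity-concavity of $S$, coincide with saddle points.

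Finally, to upgrade convergence to the equilibrium set into convergence to a specific saddle point, I use the standard argument: pick any point $(\bar x,\bar y)$ in the $\omega$-limit set (which is a saddle point by the above). Since the inequality $\dot V \le -\mathbf{1}^T h(x,y) \le 0$ holds with respect to every saddle point in \eqref{eq:bounded-auxiliary-function}, I can recenter $V$ at $(\bar x,\bar y)$, and since $(\bar x,\bar y)$ is a limit point, the recentered Lyapunov function attains $0$ on the trajectory's accumulation, forcing the full trajectory to converge to $(\bar x,\bar y)$. The main subtlety I anticipate is justifying this final recentering step—specifically, that the observable certificate condition in \eqref{eq:bounded-auxiliary-function} holds for every saddle point and not merely the one initially fixed—which should follow from the fact that the saddle-point inequality \eqref{eq:saddle-inequality} is independent of the choice of saddle point, so the same chain of inequalities through $h(x,y)$ transfers.
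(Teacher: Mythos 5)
Your proposal is correct and follows essentially the same route as the paper: the quadratic Lyapunov function centered at the certified saddle point, the bound $\dot V \le -\mathbf{1}^T h \le 0$, LaSalle plus the observability property of $h$ to show the invariant set consists of equilibria (hence saddle points), and a final recentering at an $\omega$-limit point to get convergence to a single point. One small clarification on the subtlety you flag: the recentering step does not require the certificate bound \eqref{eq:bounded-auxiliary-function} to transfer to $(\bar x,\bar y)$ at all --- you only need $\dot V \le S(\bar x,y)-S(x,\bar y)\le 0$ for the recentered $V$, which follows purely from convexity-concavity and the saddle inequality \eqref{eq:saddle-inequality} at $(\bar x,\bar y)$, exactly as in the paper.
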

\begin{proof}
The proof follows from applying LaSalle's invariance principle \cite{khalil2002nonlinear} to the following candidate Lyapunov function
\begin{equation}\label{eq:Vxy}
    V(x,y) = \frac{1}{2}\norm{x-x_\star}^2 + \frac{1}{2}\norm{y-y_\star}^2 \, ,
\end{equation}
where $(x_\star,y_\star)$ is the saddle point identified in Definition~\ref{def:certificate}.
Taking the Lie derivative of~\eqref{eq:Vxy} along the trajectory $(x(t),y(t))$ of \eqref{eq:saddle-flow} gives
\begin{align*}
    \dot V &=(x-x_\star)^T\dot x+(y-y_\star)^T\dot y\\
    &=(x-x_\star)^T\left[-\nabla_xS(x,y)\right]+(y-y_\star)^T\left[+\nabla_yS(x,y)\right]\\
    &=(x_\star-x)^T\nabla_xS(x,y)-(y_\star-y)^T\nabla_yS(x,y)\\
    &\leq
    S(x_\star,y)-S(x,y)-
    \left(S(x,y_\star)-S(x,y)\right)\\
    &=S(x_\star,y)-S(x,y_\star)
    \\
    &=\underbrace{S(x_\star,y) -S(x_\star,y_\star)}_{\leq0} + \underbrace{S(x_\star,y_\star) - S(x,y_\star)}_{\leq0} \, ,
\end{align*}
where the second equality plugs in \eqref{eq:saddle-flow}, the first inequality applies the convexity-concavity of $S(x,y)$, and the last inequality follows from the saddle property \eqref{eq:saddle-inequality} of $(x_\star,y_\star)$.

Since \eqref{eq:Vxy} is radially unbounded, every sub-level set of it is compact. From above, it follows that the trajectories of \eqref{eq:saddle-flow} are bounded and contained in an invariant domain
\beq
D_0(x_0,y_0):=\left\{ (x,y) \ \vert \  V(x,y)\leq V(x_0,y_0) \right\} \, ,
\eeq
where $(x_0,y_0)$ is any given initial point.
LaSalle's invariance principle then implies that any trajectory of \eqref{eq:saddle-flow} should converge to the largest invariant set 
\beq
\mathbb{S} := D_0 \cap\left\{ (x,y)  \ \vert \  \dot V(x(t),y(t)) \equiv 0   \right\} \, .
\eeq
Given Assumption~\ref{ass:auxiliary-function}, \eqref{eq:bounded-auxiliary-function} implies that $\mathbb{S}$ is indeed a subset of 
\begin{equation}
     \left\{ (x,y)  \ \vert \  h(x(t),y(t)) \equiv 0   \right\} \, ,
\end{equation}
which is further a subset of the equilibrium set of \eqref{eq:saddle-flow}, denoted as 
\begin{equation}
    \mathbb{E} := \left\{ (x,y)  \ \vert \  \dot x(t), \dot y(t)  \equiv 0   \right\} \, ,
\end{equation}
i.e., $\mathbb{S} \subset \mathbb{E}$.

It follows that the invariant set $\mathbb{S}$ contains only equilibrium points. If $\mathbb{S}$ were to be composed of isolated points -- only possible when there is a unique saddle point -- this would be sufficient to prove convergence to the (unique) saddle point. However, in general LaSalle's invariance principle only shows asymptotic convergence to the invariant set, without guaranteeing convergence to a point within it, even in the case where the set is composed of equilibrium points.

This issue is circumvented by the fact that all the equilibria within $\mathbb{S}$ are stable. See, e.g., \cite[Corollary~5.2]{bhat2003nontangency}.
Alternatively, notice that $\mathbb{S}$ is compact, and as a result any trajectory within the $\Omega$ limit set of \eqref{eq:saddle-flow} has a convergent sub-sequence. Let $(\bar x,\bar y)$ be the limit point of such a sequence. Due to $(\bar x,\bar y)\in \mathbb{S}$, it is also a saddle point. By changing $(x_\star,y_\star)$ specifically to $(\bar x,\bar y)$ in the definition of $V(x,y)$, it follows that $0\leq V(x(t),y(t))\rightarrow 0$ holds, which implies $(x(t),y(t))\rightarrow(\bar x,\bar y)$.
\end{proof}

The existence and characterization of such observable certificates $h(x,y)$ may still be vague from only Definition~\ref{def:certificate}. We next discuss how they can be identified through concrete examples.

\subsection{Revisiting Existing Conditions}\label{ssec:existing_conditions}

We show that the observable certificate is indeed a weaker condition underneath some of the conventional ones required for asymptotic convergence of the saddle flow dynamics \eqref{eq:saddle-flow}.

\subsubsection{Strict Convexity-Concavity}

The most common condition is arguably the strict convexity-concavity of $S(x,y)$ \cite{cherukuri2016asymptotic}.
We formalize its connection with our observable certificate as below.
\begin{assumption}\label{ass:strict-cc}
$S(x,y)$ is strictly convex-concave.
\end{assumption}
\begin{proposition}[Strict Convexity-Concavity]\label{prop:strict_convexity-concavity}
Let Assumptions~\ref{ass:paper-assumption} and \ref{ass:strict-cc} hold. Then the function
\beq
h(x,y) : =     \begin{bmatrix}
    S(x_\star,y_\star) - S(x_\star,y)\\
    S(x,y_\star) - S(x_\star,y_\star) 
    \end{bmatrix}  \, ,
\eeq
with $(x_\star,y_\star)$ being an arbitrary saddle point of $S(x,y)$, is an observable certificate of $S(x,y)$.
\end{proposition}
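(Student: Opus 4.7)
The plan is to verify the two requirements of Definition~\ref{def:certificate} directly. For the outer inequality \eqref{eq:bounded-auxiliary-function}, the left inequality holds with equality by construction, since $h(x,y)$ is defined to coincide with the upper-bound vector; the right inequality $h(x,y)\geq 0$ is immediate from the saddle property \eqref{eq:saddle-inequality} of $(x_\star,y_\star)$, which yields $S(x_\star,y)\leq S(x_\star,y_\star)\leq S(x,y_\star)$ componentwise.

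The substance of the argument is the observability requirement. First I would suppose that $(x(t),y(t))$ is a trajectory of \eqref{eq:saddle-flow} along which $h(x(t),y(t))\equiv 0$, which translates into the two scalar identities $S(x_\star,y(t))\equiv S(x_\star,y_\star)$ and $S(x(t),y_\star)\equiv S(x_\star,y_\star)$. By Assumption~\ref{ass:strict-cc}, at least one of $S(\cdot,y_\star)$ and $S(x_\star,\cdot)$ is strict; without loss of generality assume $S(\cdot,y_\star)$ is strictly convex, as the symmetric case follows by swapping roles of the two coordinates.

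Under strict convexity of $S(\cdot,y_\star)$, the saddle inequality \eqref{eq:saddle-inequality} shows that $x_\star$ is in fact its \emph{unique} minimizer, so the identity $S(x(t),y_\star)\equiv S(x_\star,y_\star)$ forces $x(t)\equiv x_\star$, and hence $\dot x(t)=-\nabla_x S(x_\star,y(t))$ acts on a trajectory pinned at $x_\star$, giving $\dot x\equiv 0$ automatically. What remains is showing $\dot y(t)=\nabla_y S(x_\star,y(t))\equiv 0$. For this I would use the remaining identity $S(x_\star,y(t))\equiv S(x_\star,y_\star)$ together with the (non-strict) concavity of $S(x_\star,\cdot)$: since $y_\star$ maximizes $S(x_\star,\cdot)$ over $\R^m$, any $y(t)$ attaining the maximal value must itself be a maximizer, and therefore a critical point, so $\nabla_y S(x_\star,y(t))=0$.

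The step I expect to be the main obstacle is precisely this last one. Strict convexity-concavity is assumed only on one side, so for the other variable I cannot argue pointwise equality $y(t)=y_\star$; the argument must instead rely on the concave-function fact that the super-level set at the global maximum consists entirely of critical points. This is the bridge that couples the two components of $h$ and closes the observability argument, after which Theorem~\ref{th:general-principle} applies and the proposition is established.
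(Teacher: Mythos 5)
Your proposal is correct and follows essentially the same route as the paper: use strict convexity of $S(\cdot,y_\star)$ to pin $x(t)\equiv x_\star$ (hence $\dot x\equiv 0$), and use plain concavity of $S(x_\star,\cdot)$ to conclude that every $y(t)$ attaining the maximal value is a maximizer and therefore a critical point, giving $\dot y=\nabla_y S(x_\star,y(t))\equiv 0$. The only difference is that you spell out the verification of \eqref{eq:bounded-auxiliary-function} and the critical-point argument more explicitly than the paper does.
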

\begin{proof}
Indeed, the upper bound of $h(x,y)$ in \eqref{eq:bounded-auxiliary-function} itself is naturally a qualified observable certificate.
Without loss of generality, we assume $S(x,y)$ is only strictly convex in $x$ for $\forall y$ and just concave in $y$ for $\forall x$. Due to the strict convexity in $x$, $S(x,y_\star) \equiv S(x_\star,y_\star)$ implies $x(t) \equiv x_\star$ uniquely. Meanwhile, $\nabla_y S(x_\star,y) \equiv 0$ follows from $S(x_\star,y) \equiv S(x_\star,y_\star)$, which leads to $\dot y = \nabla_y S(x,y) \equiv \nabla_y S(x_\star,y) \equiv 0 $. Therefore, $\dot x,\dot y\equiv 0$ is guaranteed from $h(x,y)\equiv 0$.
\end{proof}
Asymptotic convergence of the saddle flow dynamics \eqref{eq:saddle-flow} then immediately follows from Theorem~\ref{th:general-principle}.

\begin{corollary}
Let Assumptions \ref{ass:paper-assumption} and \ref{ass:strict-cc} hold.
Then the saddle flow dynamics \eqref{eq:saddle-flow} asymptotically converge to some saddle point $(x_\star,y_\star)$ of $S(x,y)$.
\end{corollary}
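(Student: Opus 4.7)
The plan is simply to chain Proposition~\ref{prop:strict_convexity-concavity} with Theorem~\ref{th:general-principle}. First, I would observe that Assumptions~\ref{ass:paper-assumption} and \ref{ass:strict-cc} are exactly the hypotheses of Proposition~\ref{prop:strict_convexity-concavity}. Invoking that proposition supplies an explicit observable certificate of $S(x,y)$, namely
\[
h(x,y) := \begin{bmatrix} S(x_\star,y_\star) - S(x_\star,y) \\ S(x,y_\star) - S(x_\star,y_\star) \end{bmatrix},
\]
for some (arbitrary) saddle point $(x_\star,y_\star)$ whose existence is granted by Assumption~\ref{ass:paper-assumption}. The existence of such an $h(x,y)$ is precisely what Assumption~\ref{ass:auxiliary-function} requires.

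With Assumptions~\ref{ass:paper-assumption} and \ref{ass:auxiliary-function} thus in force, I would then apply Theorem~\ref{th:general-principle} directly to conclude that every trajectory of the saddle flow dynamics~\eqref{eq:saddle-flow} asymptotically converges to some saddle point of $S(x,y)$, which is exactly the statement of the corollary.

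There is no real obstacle here, since all the technical content has already been discharged earlier in the paper. The LaSalle-based reduction of trajectories to the largest invariant set, together with the $\Omega$-limit argument that upgrades invariant-set convergence to convergence to a single point, is handled inside the proof of Theorem~\ref{th:general-principle}; the verification that $h(x,y)\equiv 0$ forces $\dot x,\dot y \equiv 0$ under strict convexity (respectively, strict concavity) in one argument is carried out in the proof of Proposition~\ref{prop:strict_convexity-concavity}. The corollary is therefore essentially the composition of these two results and warrants only a one-line proof.
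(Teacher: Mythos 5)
Your proposal matches the paper exactly: the paper states that the corollary ``immediately follows from Theorem~\ref{th:general-principle}'' once Proposition~\ref{prop:strict_convexity-concavity} has established that the given $h(x,y)$ is an observable certificate, thereby discharging Assumption~\ref{ass:auxiliary-function}. Your one-line composition of the two results is the intended argument, and no further justification is needed.
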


\subsubsection{Proximal Regularization}

In the particular form of saddle flow dynamics known as primal-dual dynamics~\cite{cherukuri2016asymptotic}, a proximal regularization method is proposed in \cite{goldsztajn2019proximal,Goldsztajn2020proximal} to guarantee asymptotic convergence of the regularized saddle flow dynamics, even in the absence of strict convexity-concavity. 
Specifically, a surrogate differentiable convex-concave function $$\bar S(z,y):= \min_{x} \left\{ S(x,y) + \frac{1}{2}\Vert x-z \Vert^2 \right\} $$ is defined from $S(x,y)$ that maintains the same saddle points \cite{Goldsztajn2020proximal}.
Then the following regularized saddle flow dynamics 
\begin{subequations}\label{eq:saddle-flow_proximal}
\begin{align}
    \dot z &= -\,\nabla_z \bar S(z,y)\,,\label{eq:p-saddle-flow_proximal}\\
    \dot y &= +\,\nabla_y \bar S(z,y)\,, \label{eq:d-saddle-flow_proximal}
\end{align}
\end{subequations}
suffice to locate a saddle point.
We formalize the connection of this method with our observable certificate as follows.
\begin{proposition}[Proximal Regularization]\label{prop:proximal-convergence}
Let $S(x,y)$ be a Lagrangian function for some constrained convex program and Assumption~\ref{ass:paper-assumption} hold. Then the function
\beq\label{eq:auxiliary_function_proximal}
h(z,y) : =     \begin{bmatrix}
    \bar S(z_\star,y_\star) - \bar S(z_\star,y)\\
    \frac{1}{2} \Vert \bar x(z,y_\star) - z\Vert^2 
    \end{bmatrix} \, ,
\eeq
with $\bar x(z,y_\star) : = \arg \min_{x} \left\{S(x,y_\star) + \frac{1}{2}\Vert x-z\Vert^2 \right\}$ and $(z_\star,y_\star)$ being an arbitrary saddle point of $\bar S(z,y)$, is an observable certificate of $\bar S(z,y)$.
\end{proposition}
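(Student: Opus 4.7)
My plan is to verify the two conditions in Definition~\ref{def:certificate} for $h(z,y)$ defined in~\eqref{eq:auxiliary_function_proximal} with respect to $\bar S(z,y)$. For the bound in~\eqref{eq:bounded-auxiliary-function}, the nonnegativity of both entries of $h$ is immediate, and the upper bound on the first entry holds with equality by construction. For the upper bound on the second entry, I would write $\bar S(z, y_\star) = S(\bar x(z, y_\star), y_\star) + \frac{1}{2}\|\bar x(z, y_\star) - z\|^2$ by definition, and use the fact that $\bar S$ and $S$ share the same saddle points to conclude $\bar x(z_\star, y_\star) = z_\star$ and $\bar S(z_\star, y_\star) = S(z_\star, y_\star)$. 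Primal optimality of $z_\star$ on $S(\cdot, y_\star)$ then gives $S(\bar x(z, y_\star), y_\star) \geq S(z_\star, y_\star)$, delivering the bound.

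For the observability property, I would take any trajectory of~\eqref{eq:saddle-flow_proximal} satisfying $h(z(t), y(t)) \equiv 0$ and aim to show $\dot z, \dot y \equiv 0$. The second entry of $h$ vanishing gives $\bar x(z(t), y_\star) = z(t)$, so the first-order condition of the proximal inner minimization yields $\nabla_x S(z(t), y_\star) = 0$. The first entry vanishing means $y(t) \in \arg\max_y \bar S(z_\star, y)$; by the envelope theorem and the Lagrangian structure of $S$, one has $\nabla_y \bar S(z_\star, y) = g(\bar x(z_\star, y))$, where $g$ denotes the constraint map, so $\bar x(z_\star, y(t))$ is primal feasible. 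Combining primal feasibility and primal optimality of $z_\star$ with the equality $\bar S(z_\star, y(t)) = \bar S(z_\star, y_\star)$ and strong convexity of the proximal map would force $\bar x(z_\star, y(t)) = z_\star$, and hence $\nabla_x S(z_\star, y(t)) = 0$.

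Using the linearity of a Lagrangian in $y$, the two relations $\nabla_x S(z(t), y_\star) = 0$ and $\nabla_x S(z_\star, y(t)) = 0$ should combine to give $\nabla_x S(z(t), y(t)) = 0$, which by the KKT condition of the proximal problem implies $\bar x(z(t), y(t)) = z(t)$, hence $\dot z \equiv 0$. With $z(t)$ constant, $\dot y = g(z(t))$ is also constant, and the requirement that $y(t)$ remain inside the argmax set of $\bar S(z_\star, \cdot)$, which under affine constraints is a lower-dimensional affine subspace, would force $\dot y \equiv 0$. The hardest step will be this observability argument: since $h$ only probes $\bar S$ on the slices $\{(z, y_\star)\}$ and $\{(z_\star, y)\}$, bridging from these slices to the actual trajectory $(z(t), y(t))$ relies crucially on both the Lagrangian linearity in $y$ and the strict convexity of the Moreau envelope.
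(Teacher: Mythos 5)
First, note that the paper does not actually supply a proof of this proposition: it explicitly states that the details are omitted and defers to the cited reference, remarking only that the ``complementary equilibrium properties'' of the proximal regularization are essential to validating the certificate. So your sketch cannot be checked line-by-line against an in-paper argument; in spirit, however, it is consistent with what the paper gestures at.

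Your verification of the bound \eqref{eq:bounded-auxiliary-function} is correct: writing $\bar S(z,y_\star)=S(\bar x(z,y_\star),y_\star)+\frac12\|\bar x(z,y_\star)-z\|^2$, using $\bar x(z_\star,y_\star)=z_\star$ (from $\nabla_z\bar S(z_\star,y_\star)=z_\star-\bar x(z_\star,y_\star)=0$) and $S(\bar x(z,y_\star),y_\star)\ge S(z_\star,y_\star)$ yields exactly the second row, and the first row holds with equality. Two gaps remain in the observability half. (i) The step that combines $\nabla_xS(z(t),y_\star)=0$ and $\nabla_xS(z_\star,y(t))=0$ into $\nabla_xS(z(t),y(t))=0$ via ``linearity in $y$'' writes $\nabla_xS(x,y)=\nabla f(x)+J_g(x)^Ty$ and needs $J_g(z(t))^T(y(t)-y_\star)=0$; what the $z_\star$-slice actually gives you is $J_g(z_\star)^T(y(t)-y_\star)=0$, which transfers to the point $z(t)$ only when $J_g$ is constant, i.e.\ for affine constraints. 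The proposition as stated allows a general constrained convex program, so as written you prove a narrower statement (or must add that hypothesis). (ii) The final step is not valid in the form you give it: once $z(t)\equiv z_0$, you have $\dot y=g(z_0)=A(z_0-z_\star)$ constant, and the requirement that $y(t)$ stay in the argmax set $y_\star+\ker(A^T)$ only forces $\dot y\in\ker(A^T)$, not $\dot y=0$ --- a trajectory can move with nonzero constant velocity inside an affine subspace. The missing ingredient is that $\dot y=A(z_0-z_\star)\in\mathrm{range}(A)$ while also $\dot y\in\ker(A^T)$, and $\mathrm{range}(A)\cap\ker(A^T)=\{0\}$. With those two repairs (and, incidentally, the simpler chain $\bar S(z_\star,y)\le S(z_\star,y)\le S(z_\star,y_\star)=\bar S(z_\star,y_\star)$, whose saturation plus uniqueness of the proximal minimizer immediately gives $\bar x(z_\star,y(t))=z_\star$, replacing your envelope-theorem detour), your argument closes for affinely constrained programs.
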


Details of the proof are omitted here and readers are referred to \cite{Goldsztajn2020proximal} for more insights. We remark that the identification of this observable certificate \eqref{eq:auxiliary_function_proximal} does not significantly alleviate the analysis overheads since the complementary equilibrium properties of proximal regularization on the Lagrangian $S(x,y)$ are still crucial to validating the observable certificate \eqref{eq:auxiliary_function_proximal} and establishing convergence.

Anyhow, the existence of an observable certificate satisfies Assumption~\ref{ass:auxiliary-function} for $\bar S(z,y)$ and thus asymptotic convergence of the saddle flow dynamics \eqref{eq:saddle-flow_proximal} follows immediately from Theorem~\ref{th:general-principle}.
\begin{corollary}
Let $S(x,y)$ be a Lagrangian function for some constrained convex program and Assumption~\ref{ass:paper-assumption} hold. Then the regularized saddle flow dynamics \eqref{eq:saddle-flow_proximal} asymptotically converge to some saddle point $(z_\star,y_\star)$ of $\bar S(z,y)$, with $(x_\star =z_\star,y_\star)$ being a saddle point of $S(x,y)$.
\end{corollary}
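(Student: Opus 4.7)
The plan is to obtain the corollary as a direct corollary of Theorem~\ref{th:general-principle} applied to the surrogate function $\bar S(z,y)$, together with Proposition~\ref{prop:proximal-convergence}. The first step is to verify that $\bar S(z,y)$ itself inherits the hypotheses needed to invoke Theorem~\ref{th:general-principle}, i.e., that $\bar S$ satisfies Assumption~\ref{ass:paper-assumption} and admits an observable certificate in the sense of Definition~\ref{def:certificate}. Continuous differentiability of $\bar S$ follows from standard Moreau envelope properties (the inner minimization in $\bar S(z,y) = \min_{x}\{S(x,y) + \tfrac{1}{2}\|x-z\|^2\}$ has a unique argmin $\bar x(z,y)$, and $\nabla_z \bar S(z,y) = z - \bar x(z,y)$ is continuous), while convexity-concavity of $\bar S$ is preserved by partial infimal convolution against a strongly convex quadratic.

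Next, I would invoke the saddle-point correspondence from \cite{Goldsztajn2020proximal}: because $S(x,y)$ is a Lagrangian of a constrained convex program, every saddle point $(z_\star,y_\star)$ of $\bar S$ satisfies $\bar x(z_\star,y_\star) = z_\star$, and the pair $(x_\star,y_\star) := (z_\star,y_\star)$ is a saddle point of the original $S(x,y)$. Combined with the existence of a saddle point of $S$ given by Assumption~\ref{ass:paper-assumption}, this guarantees that $\bar S$ too has a saddle point, completing the verification of Assumption~\ref{ass:paper-assumption} for $\bar S$.

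With $\bar S$ satisfying Assumption~\ref{ass:paper-assumption}, Proposition~\ref{prop:proximal-convergence} supplies the observable certificate $h(z,y)$ in \eqref{eq:auxiliary_function_proximal}, so Assumption~\ref{ass:auxiliary-function} holds for $\bar S$. Theorem~\ref{th:general-principle} then applies to the saddle flow dynamics \eqref{eq:saddle-flow_proximal} of $\bar S(z,y)$, yielding asymptotic convergence of $(z(t),y(t))$ to some saddle point $(z_\star,y_\star)$ of $\bar S$. Translating back via the correspondence, $(x_\star=z_\star,\,y_\star)$ is a saddle point of $S$, which is the claimed statement.

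The main obstacle is not the application of Theorem~\ref{th:general-principle} itself, but rather the verification that all of its prerequisites carry over to $\bar S$, most notably the identification of argmin-fixed-points $\bar x(z_\star,y_\star) = z_\star$ at equilibria and the resulting bijection between saddle points of $\bar S$ and $S$. Since these complementary equilibrium properties are exactly what Proposition~\ref{prop:proximal-convergence} relies upon and are developed in detail in \cite{Goldsztajn2020proximal}, the proof of the corollary itself is short: it amounts to chaining Proposition~\ref{prop:proximal-convergence}, Theorem~\ref{th:general-principle}, and the saddle-point correspondence, with no additional calculation required.
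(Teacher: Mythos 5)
Your proposal is correct and follows essentially the same route as the paper: the paper also obtains this corollary by combining the observable certificate of Proposition~\ref{prop:proximal-convergence} (so that Assumption~\ref{ass:auxiliary-function} holds for $\bar S(z,y)$) with Theorem~\ref{th:general-principle}, and by citing the saddle-point correspondence between $\bar S$ and $S$ from \cite{Goldsztajn2020proximal}. The extra detail you supply on verifying that $\bar S$ inherits Assumption~\ref{ass:paper-assumption} is consistent with the paper's remarks and does not change the argument.
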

In fact, even the differentiability in Assumption~\ref{ass:paper-assumption} is not required since the surrogate $\bar S(z,y)$ can be continuously differentiable regardless. However, this proximal regularization method is only limited to the particular form of primal-dual dynamics.

\subsection{Separable Regularization}\label{ssec:augmentation_regularization}






    

We further design a novel separable regularization method that exploits our observable certificate and only requires Assumption~\ref{ass:paper-assumption} for a regularized version of saddle flow dynamics to asymptotically converge to a saddle point.
The key of this method is to augment the domain of $S(x,y)$ and introduce regularization terms without altering the positions of the original saddle points.
In particular, we propose a regularized surrogate for $S(x,y)$ via the following augmentation
\begin{equation}\label{eq:regularized-saddle}
    S(x,z, y,w): = \frac{1}{2\rho  }\|x-z\|^2 + S(x,y) - \frac{1}{2\rho}\|y-w \|^2,
\end{equation}
where $z\in \R^n$ and $w \in \R^m$ serve as two new sets of virtual variables and $\rho >0$ is a constant regularization coefficient. 
It is straightforward to verify the fixed positions of saddle points between $S(x,y)$ and $S(x,z,y,w)$ with virtual variables aligned with original variables.

\begin{lemma}[Saddle Point Invariance]\label{th:saddle-characterization}
Let Assumption~\ref{ass:paper-assumption} hold. Then a point $(x_\star,y_\star)$ is a saddle point of $S(x,y)$ if and only if $(x_\star, z_\star, y_\star,w_\star)$ is a saddle point of $S(x,z,y,w)$, with
\begin{equation}
    x_\star=z_\star ~\textrm{ and }~  y_\star=w_\star  \, . \label{eq:reg-saddle-point-property}
\end{equation}
\end{lemma}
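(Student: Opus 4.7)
The plan is to prove both implications by leveraging the fact that the augmentation preserves the convex-concave structure of $S(x,y)$ and decouples cleanly into the four blocks of variables. First, I would observe that the quadratic penalty $\tfrac{1}{2\rho}\|x-z\|^2$ is jointly convex in $(x,z)$ and the penalty $-\tfrac{1}{2\rho}\|y-w\|^2$ is jointly concave in $(y,w)$. Combined with Assumption~\ref{ass:paper-assumption}, this makes $S(x,z,y,w)$ continuously differentiable and convex-concave with respect to the partition $((x,z),(y,w))$, so saddle points are characterized equivalently by the saddle inequality~\eqref{eq:saddle-inequality} or by the first-order stationarity conditions.

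For the ``only if'' direction, I would set $z_\star=x_\star$ and $w_\star=y_\star$, and verify the saddle inequality for the augmented function directly. Evaluating $S(x_\star,z_\star,y,w)$, the $x$-penalty vanishes at $(x_\star,z_\star)$ and the $y$-penalty is non-positive, so after using $S(x_\star,y)\le S(x_\star,y_\star)$ from \eqref{eq:saddle-inequality} one obtains the upper bound $S(x_\star,z_\star,y_\star,w_\star)$. The symmetric computation, using $S(x,y_\star)\ge S(x_\star,y_\star)$ together with $\tfrac{1}{2\rho}\|x-z\|^2\ge0$, yields the lower bound against $S(x,z,y_\star,w_\star)$.

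For the ``if'' direction, I would invoke the first-order conditions on $S(x,z,y,w)$. The stationarity in $z$ forces $x_\star=z_\star$ and the stationarity in $w$ forces $y_\star=w_\star$; substituting these back into the remaining two partial derivatives eliminates the quadratic terms and leaves exactly $\nabla_x S(x_\star,y_\star)=0$ and $\nabla_y S(x_\star,y_\star)=0$. By the convex-concavity of $S(x,y)$, these are sufficient conditions for $(x_\star,y_\star)$ to satisfy \eqref{eq:saddle-inequality}.

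The only subtle point I expect is the ``not a local extremum'' clause in Definition~\ref{def:saddle-point}, since the regularization might in principle create a spurious extremum. I would address it by transferring the saddle inequality across the two functions: whenever $(x_\star,y_\star)$ satisfies \eqref{eq:saddle-inequality} non-trivially, the augmented function inherits strict variation in both $(x,z)$ and $(y,w)$ directions through its quadratic terms, and conversely any non-extremality at $(x_\star,z_\star,y_\star,w_\star)$ projects down to $(x_\star,y_\star)$. Apart from this bookkeeping, the argument is routine algebraic verification of the saddle inequality.
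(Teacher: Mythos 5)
Your proof is correct, and the two directions split differently from the paper's argument. The ``only if'' direction coincides with what the paper does: both verify the saddle inequality for $S(x,z,y,w)$ at $(x_\star,x_\star,y_\star,y_\star)$ by noting that the quadratic terms vanish there and are signed elsewhere. For the ``if'' direction, however, the paper stays entirely with the inequality characterization: it writes a chain of equivalences
$S(x_\star,y)-\tfrac{1}{2\rho}\|y-w\|^2\leq S(x_\star,y_\star)\leq S(x,y_\star)+\tfrac{1}{2\rho}\|x-z\|^2 \iff S(x_\star,y)\leq S(x_\star,y_\star)\leq S(x,y_\star)$,
where the nontrivial implication follows by choosing $w=y$ and $z=x$. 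You instead pass to first-order stationarity of the augmented function and use convex-concavity to go back to the saddle inequality. Your route buys something the paper's proof does not actually deliver: stationarity in $z$ and $w$ \emph{forces} $x_\star=z_\star$ and $y_\star=w_\star$, so you prove that every saddle point of $S(x,z,y,w)$ has this form, whereas the paper's chain of equivalences presupposes the equalities in its middle term and only handles saddle points already of that form. Since the corollary following Proposition~\ref{th:regularization-asymptotic-convergence} implicitly relies on this necessity, your version is arguably the more complete one. One caveat on your final paragraph: the claim that non-extremality of the augmented function ``projects down'' to non-extremality of $S$ is false in degenerate cases (e.g., $S(x,y)=\|x\|^2$, where $(0,y_0)$ is a local minimum of $S$ yet $(0,0,y_0,y_0)$ is not a local extremum of the augmented function, since perturbing $z$ alone increases it and perturbing $w$ alone decreases it). This does not damage the result because the paper operationally identifies saddle points with the inequality \eqref{eq:saddle-inequality} (cf.\ Assumption~\ref{ass:paper-assumption}), and your argument is sound under that characterization; you should simply drop the converse half of that remark rather than try to repair it.
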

\begin{proof}
Recall the saddle property \eqref{eq:saddle-inequality} of a saddle point, this theorem follows immediately from
\begin{small}
\begin{align*}
&S(x_\star,z_\star,y,w)\leq S(x_\star,x_\star,y_\star,y_\star)\leq S(x,z,y_\star,w_\star) \\[1.5ex]
\iff &S(x_\star,z_\star,y,w)\leq S(x_\star,y_\star)\leq S(x,z,y_\star,w_\star) \\ 
\iff & S(x_\star,y)\!-\!\frac{\norm{y\!-\!w}}{2\rho}^2\leq S(x_\star,y_\star)\leq S(x,y_\star)\!+\!\frac{\norm{x\!-\!z}^2}{2\rho}\\
\iff & S(x_\star,y)\leq S(x_\star,y_\star)\leq S(x,y_\star)  \, ,
\end{align*}\end{small}%
where the first and second steps build upon the definition \eqref{eq:regularized-saddle} of $S(x,z,y,w)$, and the third step uses norm non-negativity.
\end{proof}
The regularized function $S(x,z,y,w)$ is convex in $(x,z)$, concave in $(y,w)$, and continuously differentiable with at least one saddle point, by its definition in \eqref{eq:regularized-saddle} and Lemma~\ref{th:saddle-characterization}. Therefore, Assumption~\ref{ass:paper-assumption} also holds for $S(x,z,y,w)$.
Lemma~\ref{th:saddle-characterization} ensures that whenever we locate a saddle point of $S(x,z,y,w)$, a saddle point of $S(x,y)$ satisfying \eqref{eq:saddle-inequality} is attained simultaneously.
This motivates us to instead look at the saddle flow dynamics of $S(x,z,y,w)$.

Following \eqref{eq:saddle-flow}, this regularized version of saddle flow dynamics are given by 
\begin{subequations}\label{eq:reg-saddle-flow}
\begin{align}
    \dot x &
    =-\,\nabla_xS(x, y)-\frac{1}{\rho}(x-z)
    \,,\label{eq:reg-saddle-x}\\
    \dot{z} &
    =\frac{1}{\rho}(x-z)
    \,,\label{eq:reg-saddle-z}\\
    \dot y &
    =+\,\nabla_y S(x,y)-\frac{1}{\rho}(y-w)
    \,,
    \label{eq:reg-saddle-y}\\
    \dot{w} &
    = \frac{1}{\rho}(y-w)\,.
    \label{eq:reg-saddle-w}
\end{align}
\end{subequations}
Although this dynamic law has twice as many state variables as its prototype \eqref{eq:saddle-flow}, it is important to notice that, unlike the proximal gradient algorithm \cite{goldsztajn2019proximal,Goldsztajn2020proximal,parikh2014proximal} and the equality constrained regularization \cite{richert2015robust,cherukuri2017distributed}, \eqref{eq:reg-saddle-flow} still preserves the same distributed structure that \eqref{eq:saddle-flow} may have. As a result, it can be implemented in a fully distributed fashion.

We are now ready to provide the key result that the regularized saddle flow dynamics \eqref{eq:reg-saddle-flow} asymptotically reach a saddle point as long as the minimal convexity-concavity holds for $S(x,y)$.

\begin{proposition}[Separable Regularization]\label{th:regularization-asymptotic-convergence}
Let Assumption~\ref{ass:paper-assumption} hold. Then the function
\beq
h(x,z,y,w):=
\begin{bmatrix}
     \frac{1}{2\rho}\Vert y-w\Vert^2 \\
     \frac{1}{2\rho} \Vert x-z \Vert^2
\end{bmatrix}   \, 
\eeq
is an observable certificate of $S(x,z,y,w)$.
\end{proposition}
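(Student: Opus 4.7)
The plan is to verify directly the two defining properties of an observable certificate from Definition~\ref{def:certificate}. First, for the sandwich inequality \eqref{eq:bounded-auxiliary-function}, I would exploit Lemma~\ref{th:saddle-characterization}: any saddle point of $S(x,z,y,w)$ has the form $(x_\star,z_\star,y_\star,w_\star)$ with $x_\star=z_\star$ and $y_\star=w_\star$, so the quadratic penalties vanish at the saddle point and $S(x_\star,z_\star,y_\star,w_\star)=S(x_\star,y_\star)$. Substituting the definition \eqref{eq:regularized-saddle} into each component of the upper bound then yields $S(x_\star,z_\star,y_\star,w_\star)-S(x_\star,z_\star,y,w)=S(x_\star,y_\star)-S(x_\star,y)+\tfrac{1}{2\rho}\|y-w\|^2$ and, symmetrically, $S(x,z,y_\star,w_\star)-S(x_\star,z_\star,y_\star,w_\star)=S(x,y_\star)-S(x_\star,y_\star)+\tfrac{1}{2\rho}\|x-z\|^2$. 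Invoking the saddle inequalities \eqref{eq:saddle-inequality} of $(x_\star,y_\star)$ for $S(x,y)$ makes the first two terms in each expression non-negative, which immediately delivers the lower bounds asserted by $h(x,z,y,w)\ge 0$.

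Second, I would verify the observability condition. Assume a trajectory of \eqref{eq:reg-saddle-flow} satisfies $h(x(t),z(t),y(t),w(t))\equiv 0$, which is exactly $x(t)\equiv z(t)$ and $y(t)\equiv w(t)$. Equations \eqref{eq:reg-saddle-z} and \eqref{eq:reg-saddle-w} then give $\dot z\equiv 0$ and $\dot w\equiv 0$ outright. Differentiating the identities $x\equiv z$ and $y\equiv w$ gives $\dot x\equiv\dot z$ and $\dot y\equiv\dot w$, so $\dot x\equiv 0$ and $\dot y\equiv 0$ follow as well. This confirms that $h$ vanishes only on the equilibrium set of \eqref{eq:reg-saddle-flow}, completing the second defining property of an observable certificate.

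I do not anticipate a real obstacle: the separable choice of $h$ is engineered so that both requirements fall out by direct calculation, with the quadratic penalty terms playing a dual role of (i) providing precisely the lower bound in \eqref{eq:bounded-auxiliary-function} and (ii) acting as the residual signals driving $z$ and $w$, whose vanishing forces the remaining gradient terms in \eqref{eq:reg-saddle-x} and \eqref{eq:reg-saddle-y} to act trivially as well. Once Proposition~\ref{th:regularization-asymptotic-convergence} is established, Theorem~\ref{th:general-principle} applied to $S(x,z,y,w)$ together with Lemma~\ref{th:saddle-characterization} immediately yields asymptotic convergence of \eqref{eq:reg-saddle-flow} to some $(x_\star,z_\star,y_\star,w_\star)$ whose $(x_\star,y_\star)$ components are a saddle point of $S(x,y)$, without any strict convexity-concavity requirement on $S$.
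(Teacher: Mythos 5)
Your proposal is correct and follows essentially the same route as the paper: expand the definition of $S(x,z,y,w)$ at a saddle point with $x_\star=z_\star$, $y_\star=w_\star$ (Lemma~\ref{th:saddle-characterization}) so that each component of the upper bound in \eqref{eq:bounded-auxiliary-function} equals a saddle-inequality term plus the corresponding quadratic penalty, and then use $x\equiv z$, $y\equiv w$ together with \eqref{eq:reg-saddle-z} and \eqref{eq:reg-saddle-w} to conclude $\dot z,\dot w\equiv 0$ and hence $\dot x,\dot y\equiv 0$. Your explicit step of differentiating the identities $x\equiv z$ and $y\equiv w$ is exactly the justification the paper leaves implicit in its phrase ``simultaneously guaranteed.''
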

\begin{proof}
The above observable certificate $h(x,z,y,w)$ satisfies \eqref{eq:bounded-auxiliary-function} in light of the following calculation:
\begin{align*}
  &  \begin{bmatrix}
    S(x_\star,z_\star,y_\star,w_\star) - S(x_\star,z_\star,y,w) \\
    S(x,z,y_\star,w_\star) - S(x_\star,z_\star,y_\star,w_\star) 
    \end{bmatrix}\\
   \ge &  \begin{bmatrix}
    \underbrace{S(x_\star,y_\star) - S(x_\star,y) }_{\ge 0}+ \frac{1}{2\rho}\Vert y-w\Vert^2 \\
    \underbrace{S(x,y_\star)    - S(x_\star,y_\star) }_{\ge 0}+ \frac{1}{2\rho} \Vert x-z \Vert^2
    \end{bmatrix}\\
   \ge &  \begin{bmatrix}
     \frac{1}{2\rho}\Vert y-w\Vert^2 \\
     \frac{1}{2\rho} \Vert x-z \Vert^2
    \end{bmatrix}\\
    \ge & \ 0 \, .
\end{align*}
The fact that $h(x,z,y,w) \equiv 0$ implies $x(t) \equiv z(t)$ and $y(t)\equiv w(t)$ enforces $\dot z, \dot w \equiv 0$ according to \eqref{eq:reg-saddle-z}, \eqref{eq:reg-saddle-w}, and then $\dot x, \dot y \equiv 0$ is simultaneously guaranteed. 
\end{proof}
Assumption~\ref{ass:auxiliary-function} holds for the regularized function $S(x,z,y,w)$ and asymptotic convergence of the regularized saddle flow dynamics \eqref{eq:reg-saddle-flow} follows immediately from Theorem~\ref{th:general-principle}.

\begin{corollary}
Let Assumption~\ref{ass:paper-assumption} hold. Then the regularized saddle flow dynamics \eqref{eq:reg-saddle-flow} asymptotically converge to some saddle point $(x_\star,z_\star,y_\star,w_\star)$ of $S(x,z,y,w)$, with $(x_\star,y_\star)$ being a saddle point of $S(x,y)$.
\end{corollary}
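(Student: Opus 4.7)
The plan is to treat this corollary as an essentially immediate composition of the three preceding results: Lemma~\ref{th:saddle-characterization} (saddle-point invariance under the augmentation), Proposition~\ref{th:regularization-asymptotic-convergence} (existence of the observable certificate for $S(x,z,y,w)$), and Theorem~\ref{th:general-principle} (sufficiency of an observable certificate for asymptotic convergence of the saddle flow dynamics). The only genuine work is to verify that Assumption~\ref{ass:paper-assumption} is inherited by the augmented function, so that Theorem~\ref{th:general-principle} can be legitimately applied to $S(x,z,y,w)$.

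First I would verify that $S(x,z,y,w)$ satisfies Assumption~\ref{ass:paper-assumption}. Convex-concavity in the augmented variables follows from inspecting the definition \eqref{eq:regularized-saddle}: the term $\tfrac{1}{2\rho}\|x-z\|^2$ is jointly convex in $(x,z)$, the term $-\tfrac{1}{2\rho}\|y-w\|^2$ is jointly concave in $(y,w)$, and $S(x,y)$ is itself convex-concave by Assumption~\ref{ass:paper-assumption}; so $S(x,z,y,w)$ is convex in $(x,z)$ for every fixed $(y,w)$ and concave in $(y,w)$ for every fixed $(x,z)$. Continuous differentiability is preserved because it is stable under the addition of smooth quadratics. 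Existence of a saddle point for $S(x,z,y,w)$ follows from Lemma~\ref{th:saddle-characterization}: given the saddle point $(x_\star,y_\star)$ of $S(x,y)$ guaranteed by Assumption~\ref{ass:paper-assumption}, the point $(x_\star,x_\star,y_\star,y_\star)$ is a saddle point of $S(x,z,y,w)$.

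Once Assumption~\ref{ass:paper-assumption} is established for the augmented function, Proposition~\ref{th:regularization-asymptotic-convergence} directly supplies an observable certificate $h(x,z,y,w)$ for $S(x,z,y,w)$, so Assumption~\ref{ass:auxiliary-function} holds as well. The dynamics \eqref{eq:reg-saddle-flow} are precisely the saddle flow dynamics of $S(x,z,y,w)$ in the standard form \eqref{eq:saddle-flow}, applied to the augmented state $(x,z,y,w)$. Therefore Theorem~\ref{th:general-principle} applies verbatim and yields asymptotic convergence of the trajectories of \eqref{eq:reg-saddle-flow} to some saddle point $(x_\star,z_\star,y_\star,w_\star)$ of $S(x,z,y,w)$.

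Finally, to translate this back to the original problem, I would invoke Lemma~\ref{th:saddle-characterization} one more time: the limit point $(x_\star,z_\star,y_\star,w_\star)$ must satisfy $x_\star=z_\star$ and $y_\star=w_\star$, and the pair $(x_\star,y_\star)$ is a saddle point of $S(x,y)$ in the sense of \eqref{eq:saddle-inequality}. There is no real obstacle here; the only place one might trip is in being careful that the observable certificate of Proposition~\ref{th:regularization-asymptotic-convergence} uses $(x_\star,z_\star,y_\star,w_\star)$ with $x_\star=z_\star$ and $y_\star=w_\star$ (as forced by Lemma~\ref{th:saddle-characterization}) when bounding $h(x,z,y,w)$, so that all the inner products and quadratic terms line up correctly in the chain of inequalities already used in the proof of Proposition~\ref{th:regularization-asymptotic-convergence}.
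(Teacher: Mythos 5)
Your proposal is correct and follows the same route the paper takes: the text surrounding the corollary verifies that Assumption~\ref{ass:paper-assumption} carries over to $S(x,z,y,w)$ (convexity in $(x,z)$, concavity in $(y,w)$, smoothness, and existence of a saddle point via Lemma~\ref{th:saddle-characterization}), notes that Proposition~\ref{th:regularization-asymptotic-convergence} supplies the observable certificate so that Assumption~\ref{ass:auxiliary-function} holds, and then invokes Theorem~\ref{th:general-principle} together with Lemma~\ref{th:saddle-characterization} to identify the limit as a saddle point of $S(x,y)$. Your extra care about the certificate being anchored at a saddle point with $x_\star=z_\star$ and $y_\star=w_\star$ is a sensible detail that the paper leaves implicit.
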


\begin{remark}
Proposition~\ref{th:regularization-asymptotic-convergence} indicates that only the convexity-concavity of $S(x,y)$ is required to asymptotically arrive at a saddle point through the regularized saddle flow dynamics \eqref{eq:reg-saddle-flow}. 
This condition is significantly milder than most existing ones in the literature, and is in some sense \emph{minimal}, as it includes bi-linear saddle functions as a special case. Unlike the aforementioned proximal regularization method in Section~\ref{ssec:existing_conditions}, our separable regularization method applies to saddle flow dynamics of general convex-concave functions.
\end{remark}

\section{Projected Saddle Flow Dynamics} \label{sec:proj_saddle_flow_dynamics}

In this section we generalize the results in Section~\ref{sec:asymp_convergece} to account for projections on the vector field of the saddle flow dynamics \eqref{eq:saddle-flow} that are commonly introduced in the case of solving inequality constrained optimization problems.

Specifically, we look at a projected version of saddle flow dynamics of a convex-concave function $S(x,y)$ as below:
\begin{subequations}\label{eq:saddle_flow_proj}
\begin{align}
    \dot x & = -\nabla_x S(x,y), \\
    \dot y & = \left[+\nabla_y S(x,y) \right]^+_y,
\end{align}
\end{subequations}
where, without loss of generality, we define the element-wise projection $[\cdot]_y^+$ only on part of the vector field regarding $y$ as 
\begin{equation}
    \left[\nabla_{y_i} S(x,y) \right]^+_{y_i}: = 
    \left\{
    \begin{aligned}
    & \nabla_{y_i} S(x,y),   \qquad\qquad \quad \ \textrm{if~}y_i > 0, \\
    & \max\left\{\nabla_{y_i} S(x,y), 0 \right\},  \quad \textrm{otherwise}.
    \end{aligned}\right.
\end{equation}
With this projection, $y(t)$ is constrained to be non-negative as long as it starts with a non-negative initial point. Accordingly, we slightly modify Assumption~\ref{ass:paper-assumption} to guarantee the existence of such saddle points.
\begin{assumption}\label{ass:paper-assumption_proj}
$S(x,y)$ is convex-concave, continuously differentiable, and there exists at least one saddle point $(x_\star,y_\star \ge 0)$ satisfying \eqref{eq:saddle-inequality}.
\end{assumption}
\noindent
In this context, saddle points are restrained to ones in the non-negative orthant of $y$. Therefore, any observable certificate of $S(x,y)$ will be defined on a saddle point $(x_\star,y_\star \ge 0)$ in Definition~\ref{def:certificate}.
By symmetry, an analogous projection may also be imposed on the other part of the vector field regarding $x$.
Next we formally generalize the sufficiency of observable certificates developed in Section~\ref{ssec:general_principle}.

\subsection{Observable Certificates for Projected Flows}

The generalization of Theorem~\ref{th:general-principle} for asymptotic convergence of the projected saddle flow dynamics \eqref{eq:saddle_flow_proj} to a saddle point of $S(x,y)$ is summarized as follows.
\begin{theorem}[Sufficiency of Observable Certificates for Projected Flows]\label{th:general-principle_proj}
Let Assumptions~\ref{ass:auxiliary-function} and \ref{ass:paper-assumption_proj} hold. Then the projected saddle flow dynamics \eqref{eq:saddle_flow_proj} asymptotically converge to some saddle point $(x_\star,y_\star \ge 0)$ of $S(x,y)$.
\end{theorem}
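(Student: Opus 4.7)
The plan is to mirror the structure of the proof of Theorem~\ref{th:general-principle}, using the same quadratic Lyapunov function
\[
V(x,y)=\tfrac{1}{2}\|x-x_\star\|^2+\tfrac{1}{2}\|y-y_\star\|^2,
\]
where $(x_\star,y_\star\ge 0)$ is the saddle point supplied by Assumption~\ref{ass:auxiliary-function} combined with Assumption~\ref{ass:paper-assumption_proj}. Since $V$ is still radially unbounded, compactness of sublevel sets will again give bounded trajectories once we prove $\dot V\le 0$. Forward invariance of $\{y\ge 0\}$ under the projection then ensures all trajectories of interest live in this orthant.

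The new ingredient is the Lie derivative calculation along the projected field. Writing $[\nabla_y S]^+_y=\nabla_y S+\mu$, the complementarity of $\mu$ with $y$ means that for any index $i$ one of three cases occurs: $y_i>0$ so $\mu_i=0$; $y_i=0$ and $\nabla_{y_i}S\ge 0$ so $\mu_i=0$; or $y_i=0$ and $\nabla_{y_i}S<0$ so $\mu_i=-\nabla_{y_i}S>0$. In every case $(y_i-y_{\star,i})\mu_i\le 0$, because in the last case $y_i-y_{\star,i}=-y_{\star,i}\le 0$ thanks to $y_\star\ge 0$. Therefore $(y-y_\star)^T[\nabla_y S]^+_y\le (y-y_\star)^T\nabla_y S$, and the Lie derivative is upper-bounded by exactly the expression obtained in the proof of Theorem~\ref{th:general-principle}. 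Invoking convexity-concavity of $S(x,y)$ and the saddle inequality~\eqref{eq:saddle-inequality} of $(x_\star,y_\star)$ then yields $\dot V\le 0$, and moreover $\dot V\equiv 0$ forces both $S(x_\star,y_\star)-S(x_\star,y)\equiv 0$ and $S(x,y_\star)-S(x_\star,y_\star)\equiv 0$, hence $h(x,y)\equiv 0$ via~\eqref{eq:bounded-auxiliary-function}.

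The hard step is the LaSalle argument, because the projection renders the vector field discontinuous. I would therefore appeal to the LaSalle invariance principle for Carath\'eodory systems as used in~\cite{cherukuri2016asymptotic}, which concludes convergence to the largest weakly invariant subset of $\{(x,y):\dot V\equiv 0\}$. The observable certificate property of Assumption~\ref{ass:auxiliary-function} guarantees that any trajectory on which $h\equiv 0$ is at rest for the \emph{unprojected} saddle flow, meaning $\nabla_x S\equiv 0$ and $\nabla_y S\equiv 0$; but $\nabla_y S\equiv 0$ trivially implies $[\nabla_y S]^+_y\equiv 0$, so this is also an equilibrium of the projected dynamics~\eqref{eq:saddle_flow_proj}. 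Hence the invariant set sits inside the equilibrium set of the projected flow, and each of its points is a saddle point of $S(x,y)$ in the non-negative orthant of $y$.

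Finally, convergence to a single equilibrium rather than merely to a set is recovered by the same $\Omega$-limit argument used in Theorem~\ref{th:general-principle}: pick a convergent subsequence along the trajectory, call its limit $(\bar x,\bar y)$, note that it is itself a saddle point with $\bar y\ge 0$, redefine $V$ centered at $(\bar x,\bar y)$, and observe that $V(x(t),\bar y(t))\to 0$ forces $(x(t),y(t))\to(\bar x,\bar y)$. The main obstacle I anticipate is the careful handling of the discontinuity to justify the non-increase of $V$ along Carath\'eodory solutions and to apply the invariance principle in that setting; once those are in place, every remaining piece is a direct translation of the unprojected proof.
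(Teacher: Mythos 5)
Your proposal is correct and follows essentially the same route as the paper: the same quadratic Lyapunov function, the same elementwise complementarity bound on the projection term (which the paper isolates as Lemma~\ref{lm:projection_property}), the same appeal to an invariance principle for Carath\'eodory systems to handle the discontinuous field, and the same $\Omega$-limit subsequence argument to upgrade set convergence to point convergence. The only cosmetic difference is that you prove the projection inequality inline rather than as a standalone lemma.
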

The proof requires a lemma regarding the projection $[\cdot]^+_y$.
\begin{lemma}\label{lm:projection_property}
Given any arbitrary $y,y_\star \in \mathbb{R}^m_{\ge 0}$ and $\nu \in \mathbb{R}^m$, 
$$\left( y - y_\star \right)^T \left( \left[\nu \right]^+_y - \nu  \right) \le 0 $$
holds.
\end{lemma}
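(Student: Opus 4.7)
The plan is to prove the inequality componentwise, exploiting the fact that the projection $[\cdot]^+_y$ acts independently on each coordinate. Writing the inner product as $\sum_{i=1}^m (y_i - y_{\star,i})([\nu_i]^+_{y_i} - \nu_i)$, it suffices to show that each summand is non-positive.

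First, I would partition the index set $\{1,\dots,m\}$ into cases based on the definition of $[\cdot]^+_y$. In the case $y_i > 0$, the projection is inactive and $[\nu_i]^+_{y_i} - \nu_i = 0$, so that term contributes zero. In the case $y_i = 0$, I split further according to the sign of $\nu_i$: if $\nu_i \geq 0$, then $[\nu_i]^+_{y_i} = \nu_i$ and again the term vanishes; if $\nu_i < 0$, then $[\nu_i]^+_{y_i} = 0$ so $[\nu_i]^+_{y_i} - \nu_i = -\nu_i > 0$.

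In this last remaining case the factor $(y_i - y_{\star,i}) = -y_{\star,i} \leq 0$, since $y_{\star,i} \geq 0$ by hypothesis. Hence the product of the two factors is non-positive. Summing across all indices gives the claimed inequality.

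I do not expect any significant obstacle here; the lemma is essentially a restatement of the standard variational characterization of projection onto the non-negative orthant, and the only subtlety is making sure the hypothesis $y_\star \geq 0$ is used precisely in the one case where the projection is active ($y_i = 0$, $\nu_i < 0$). Everything else is a bookkeeping exercise in case analysis.
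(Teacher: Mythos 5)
Your proof is correct and follows essentially the same route as the paper's: both arguments reduce to the observation that $[\nu_i]^+_{y_i}$ differs from $\nu_i$ only when $y_i=0$ and $\nu_i<0$, in which case the summand equals $(-y_{\star,i})(-\nu_i)\le 0$ by the hypothesis $y_\star\ge 0$. The paper simply compresses your case analysis into a single displayed sum over the active indices.
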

\begin{proof}
The proof of this lemma follows from the fact that element-wise, $[\nu_i]^+_{y_i}$ differs from $\nu_i$ only in the case of $\nu_i < 0$ and $y_i=0$ where the projection is active, which implies
$$ \left( y - y_\star \right)^T \left( \left[\nu \right]^+_y - \nu  \right) = \sum_{i:\nu_i < 0, y_i=0} (0-y_{i*})(0-\nu_i)\le 0 \, . $$
\end{proof}
Using this lemma, the proof of Theorem~\ref{th:general-principle_proj} essentially follows from that of Theorem~\ref{th:general-principle} as follows.
\begin{proof}
Consider the same quadratic Lyapunov function \eqref{eq:Vxy}. Taking its Lie derivative along the trajectory $(x(t),y(t))$ of \eqref{eq:saddle_flow_proj} yields
\begin{align*}
    \dot V &=(x-x_\star)^T\dot x+(y-y_\star)^T\dot y\\
    &=(x-x_\star)^T\left[-\nabla_xS(x,y)\right]+(y-y_\star)^T\left[+\nabla_yS(x,y)\right]^+_y\\
    &=(x_\star-x)^T\nabla_xS(x,y)-(y_\star-y)^T\nabla_yS(x,y) \\
    & \quad \;  + \underbrace{ (y-y_\star)^T \left( \left[\nabla_yS(x,y)\right]^+_y-   \nabla_yS(x,y) \right) }_{\leq 0}\\
    &\leq
    S(x_\star,y)-S(x,y)-
    (S(x,y_\star)-S(x,y))\\
    &=S(x_\star,y)-S(x,y_\star)
    \\
    &=\underbrace{S(x_\star,y) -S(x_\star,y_\star)}_{\leq0} + \underbrace{S(x_\star,y_\star) - S(x,y_\star)}_{\leq0} \, ,
\end{align*}
where the key step is to use Lemma~\ref{lm:projection_property} in the first inequality.
The rest of the proof remains almost the same except that the largest invariant set is defined between the on-off switches of the projection.  
From above, $\dot V(x,y) \equiv 0$ additionally implies $$y(t)\equiv y_\star$$ or $$ \left[\nabla_y S(x,y)\right]^+_y \equiv   \nabla_y S(x,y) \, , $$
and an invariance principle for Caratheodory systems \cite{bacciotti2006nonpathological} can be applied to account for the discontinuities in the vector field due to the projection.
\end{proof}


\begin{figure*}[htp]
    \centering
    \hspace*{-0.4cm}
    \subfloat[Control inputs.]{
        \includegraphics[width=0.27\textwidth]{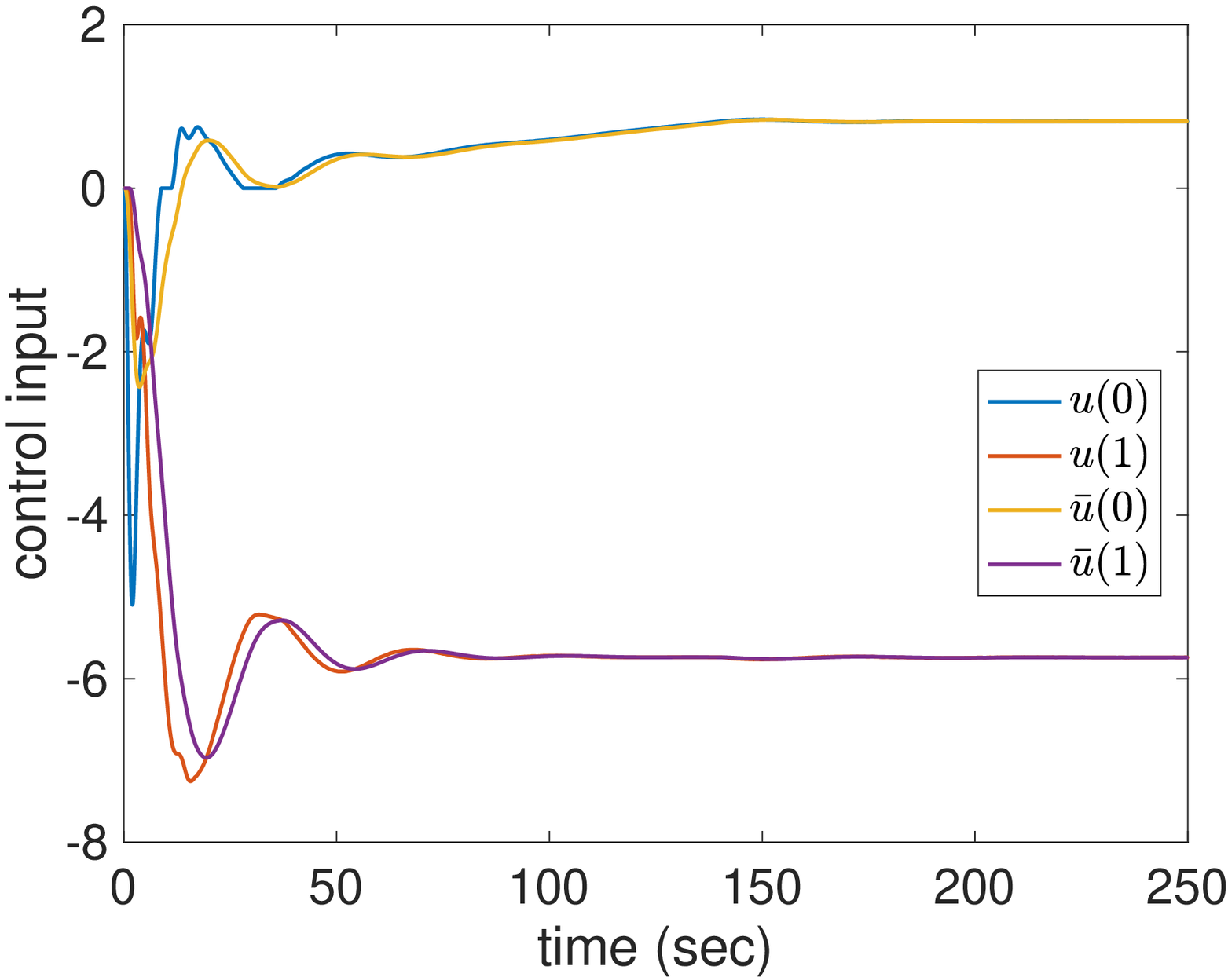}
    \label{fig:control_input}
    }    \hspace*{-0.8cm}
    \subfloat[State variables.]{
        \includegraphics[width=0.27\textwidth]{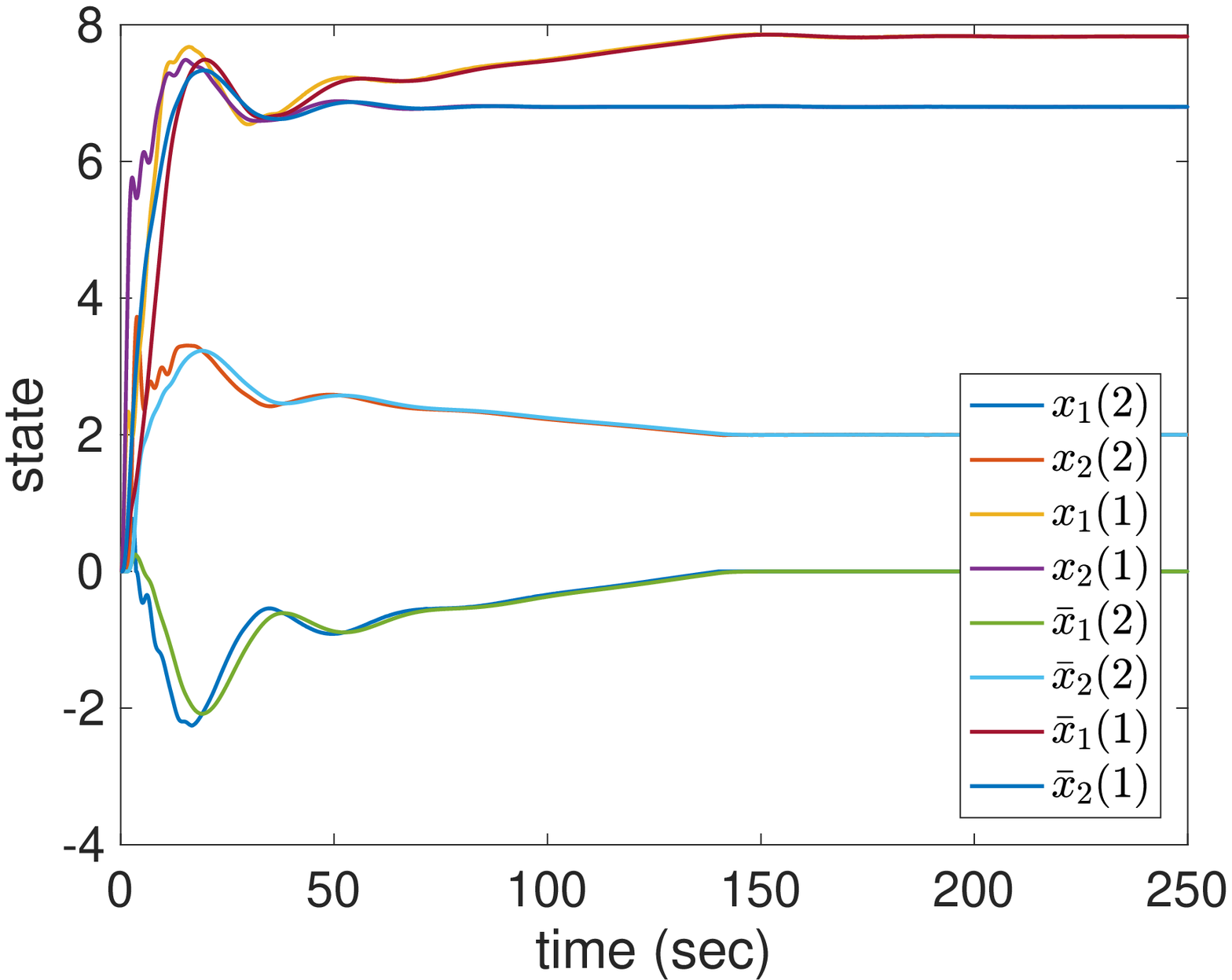}
    \label{fig:state}
    }\hspace*{-0.6cm}
    \subfloat[Dual variables.]{
        \includegraphics[width=0.27\textwidth]{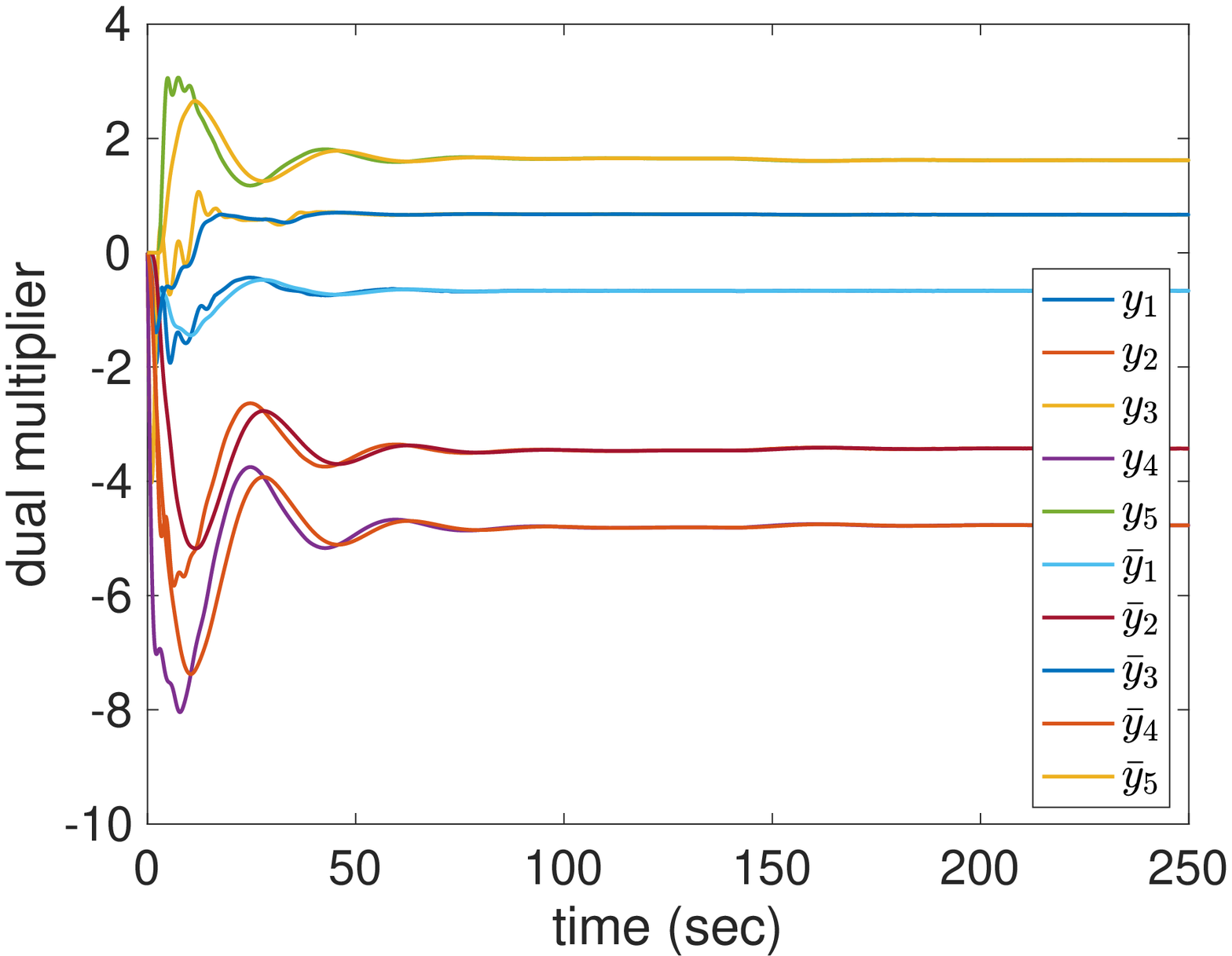}
    \label{fig:dual}
    }\hspace*{-0.7cm}
    \subfloat[System evolution.]{
        \includegraphics[width=0.27\textwidth]{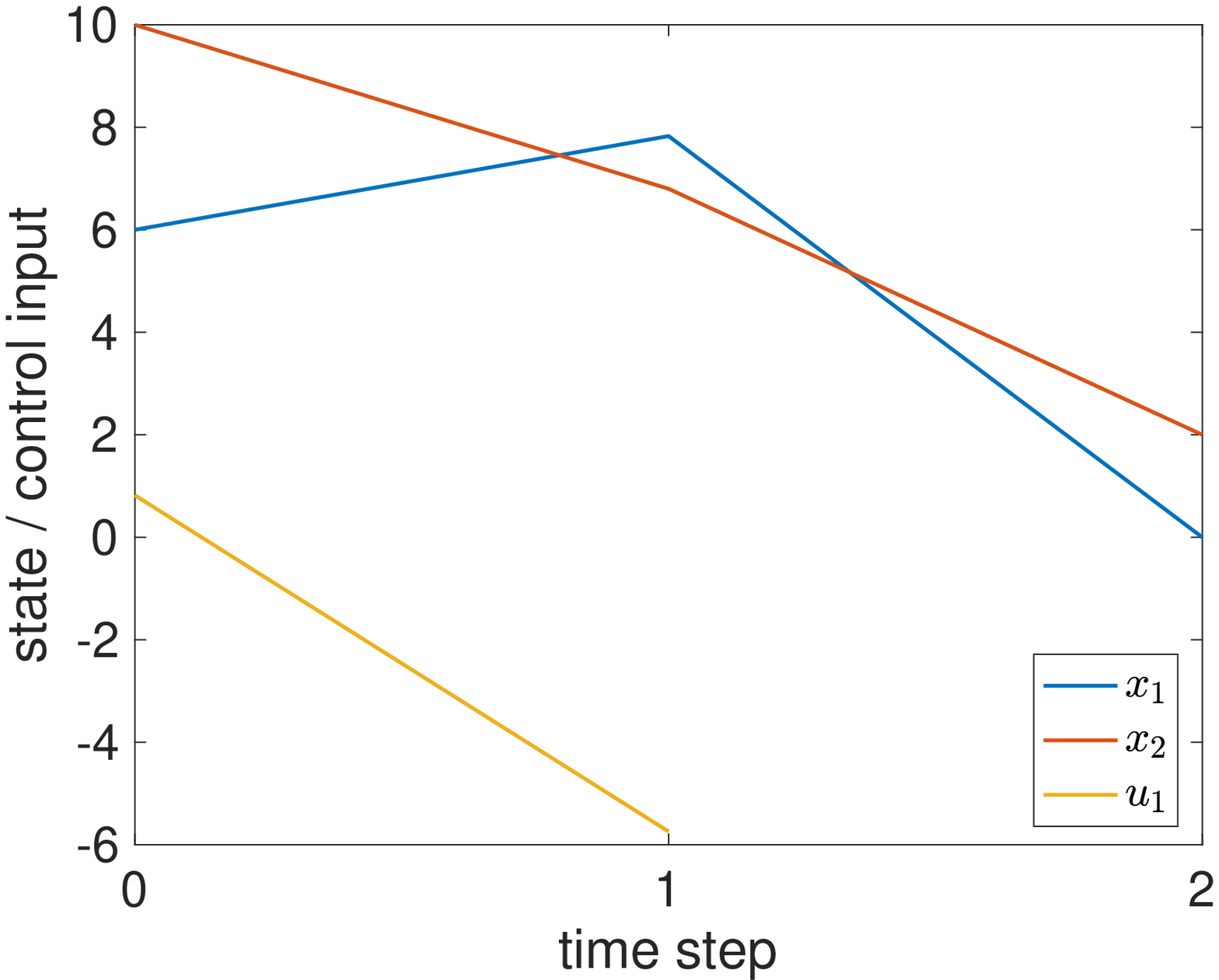}
    \label{fig:evolution}
    }
    \caption{A two-state linear optimal control problem solved by saddle flow dynamics with separable regularization.}
    \label{fig:optimal_control}
\end{figure*}


\subsection{Application: Distributed Solution to Linear Program}

Theorem~\ref{th:general-principle_proj} enables the separable regularization method in Section~\ref{ssec:augmentation_regularization} to apply to projected saddle flow dynamics as well since we can still identify the same observable certificate
\[ h(x,z,y,w):=
\begin{bmatrix}
     \frac{1}{2\rho}\Vert y-w\Vert^2 \\
     \frac{1}{2\rho} \Vert x-z \Vert^2
\end{bmatrix}  
\]
to satisfy Assumption~\ref{ass:auxiliary-function}. 
One of its straightforward applications involves solving inequality constrained linear programs in a distributed fashion with guaranteed asymptotic convergence to an optimal solution. 

Consider the following problem: 
\begin{subequations}\label{eq:constrainedLP}
\begin{eqnarray}
\min_{x\in\mathbb{R}^n} && c^T x \\
\mathrm{s.t.} && Ax-b \le 0 \ : \ y \in\mathbb{R}^m_{\ge0}
\end{eqnarray}
\end{subequations}
which corresponds to a bi-linear Lagrangian
$$S(x,y):=c^Tx + y^T(Ax-b).$$
We introduce virtual variables $z\in\mathbb{R}^n$, $w\in\mathbb{R}^m$ and a constant $\rho >0$ to define 
$$ S(x,z,y,w):= \frac{1}{2\rho}\Vert x-z \Vert^2 + c^Tx + y^T(Ax-b) - \frac{1}{2\rho}\Vert y-w \Vert^2 $$
to be its augmented Lagrangian. 
Lemma~\ref{th:saddle-characterization} implies that
$(x_\star,y_\star\ge 0)$ is a saddle point of $S(x,y)$, i.e., one optimal solution to \eqref{eq:constrainedLP}, if and only if
$(x_\star,z_\star=x_\star,y_\star\ge 0,w_\star=y_\star)$ is a saddle point of $S(x,z,y,w)$.

Then an algorithm to optimally solve a linear program of the form \eqref{eq:constrainedLP} follows immediately from asymptotic convergence of the following projected and regularized saddle flow dynamics:
\begin{subequations}\label{eq:reg-saddle-flow_proj}
\begin{align}
    \dot x &
    =-c-A^Ty-\frac{1}{\rho}(x-z)
    \,,\label{eq:reg-saddle-x_proj}\\
    \dot{z} &
    =\frac{1}{\rho}(x-z)
    \,,\label{eq:reg-saddle-z_proj}\\
    \dot y &
    = \left[Ax-b -\frac{1}{\rho}(y-w) \right]_y^+
    \,,
    \label{eq:reg-saddle-y_proj}\\
    \dot{w} &
    = \frac{1}{\rho}(y-w)\,,
    \label{eq:reg-saddle-w_proj}
\end{align}
\end{subequations}
which maintains the distributed structure where each agent $i=1,2,\dots,n$ may locally manage
\begin{subequations}\label{eq:reg-saddle-distributed}
\begin{align}
    \dot x_i &
    =-c_i-A_i^T y-\frac{1}{\rho}(x_i-z_i)
    \,,\\
    \dot{z_i} &
    =\frac{1}{\rho}(x_i-z_i)
    \,,
\end{align}
and/or each dual agent $j=1,2,\dots,m$ may locally manage
\begin{align}
    \dot y_j &
    = \left[A_j x-b_j -\frac{1}{\rho}(y_j-w_j) \right]_{y_j}^+
    \,,
    \\
    \dot{w_j} &
    = \frac{1}{\rho}(y_j-w_j)\,,
\end{align}
\end{subequations}
with $A_i$ and $A_j$ being the $i^{\text{th}}$ column and the $j^{\text{th}}$ row of $A$, respectively.

\section{Simulation Results}\label{sec:simulation}


We illustrate asymptotic convergence of the distributed algorithm \eqref{eq:reg-saddle-distributed} for linear programs, as guaranteed by our observable certificate, through a finite-horizon optimal control problem over a group of agents with coupled dynamics \cite{richert2015robust}. The goal is to use minimal aggregate control effort to maintain small system states subject to certain final performance requirements. In particular, consider the following problem defined on the time horizon $\mathcal{T}:=\{0,1,\dots,T\}$ and the set $\mathcal{N}:=\{1,2,\dots, N\}$ of agents:
\begin{subequations}
\begin{eqnarray}
\min && \sum_{t\in\mathcal{T}} \Vert x(t+1) \Vert_1 + \Vert u(t) \Vert_1 \\
\textrm{s.t.} && x(t+1) =  Gx(t) + Hu(t), ~ t\in\mathcal{T} \\
\label{eq:optimal_control.c}
&&  Dx(T+1)  - d  \le 0
\end{eqnarray}
\end{subequations}
Here $x(t)\in\R^{N}$ and $u(t)\in \R^{N}$ are respectively vectors of system states and control inputs at time $t$. $G\in\R^{N\times N}$ and $H\in\R^{N\times N}$ are the time-invariant dynamics matrix and control matrix, respectively. \eqref{eq:optimal_control.c} specifies the constraint on the final state with given $D\in\R^{M\times N}$ and $d\in\R^{M}$.
The initial state $x(0)$ is fixed.
This problem can be cast into the standard form of a linear program by splitting each variable of state and control input into two, representing its positive and negative components, e.g., $x(t) = x^+(t) - x^-(t)$ with $x^+(t),x^-(t)\ge0$:
\begin{small}
\begin{eqnarray*}
\min && \sum_{t\in\mathcal{T}} \sum_{n\in\mathcal{N}} x^+_n(t+1)  + x^-_n(t+1)+ u^+_n(t)+ u^-_n(t)  \\
\textrm{s.t.} && x^+(t+1) - x^-(t+1) =  G\left(x^+(t) - x^-(t)\right) \\
&& \qquad \qquad \qquad \quad \, + H\left(u^+(t) - u^- (t)\right), ~  t\in\mathcal{T} \\
&& D\left(x^+(T+1) - x^-(T+1)\right)  - d  \le 0 \\
&& x^+(t+1),x^-(t+1),u^+(t),u^-(t)\ge0, ~ t\in\mathcal{T} 
\end{eqnarray*}\end{small}%
It can be verified that at the optimum the split twin components of any variable cannot be simultaneously positive \cite{dantzig2006linear}. We apply \eqref{eq:reg-saddle-distributed} to solve this linear program with dual variables $y\in\R^{NT+M}$ and virtual variables $\bar x(t) \in \R^{N}$, $\bar u(t) \in \R^{N}$, $\bar y \in \R^{NT+M}$. Each agent $n$ is responsible for $4T$ variables, $2T$ for $x_n$ and $2T$ for $u_n$, while up to $4NT$ variables with any consensus-based distributed algorithm. Note that the dual variable updates may also be locally managed by up to $NT+M$ dual agents, if necessary.

We test a specific case of $N=2$ agents in a time horizon of $T=2$ time slots with the initial state $x(0)=[6~10]^T$. The system is driven by a single control input on the first agent's state. More specifically, we set the regularization coefficient $\rho$ to be 3, and the problem parameters are given by
\begin{small}
\begin{equation*}
G:=\begin{bmatrix}
     1.1 & 0 \\
     -0.7 & 1.1
\end{bmatrix}, \,
H:=\begin{bmatrix}
     1.5 & 0\\
     0 & 0
\end{bmatrix}, \,
D:=\begin{bmatrix}
     1 \\ 1.5
\end{bmatrix}^T, \,
d:= 3.
\end{equation*}
\end{small}%
The evolution of the continuous-time saddle flow dynamics for the control inputs, system states, dual multipliers as well as their virtual counterparts is displayed in Figures~\ref{fig:optimal_control}\subref{fig:control_input}-\ref{fig:optimal_control}\subref{fig:dual}, respectively. For each primal variable, we have already combined its two split components back together for conciseness.

Apparently, the regularization renders a variable bound tightly with its virtual counterpart in transient, and they asymptotically converge to the same value of an optimal solution in the limit. Notably, $y_5$, corresponding to \eqref{eq:optimal_control.c}, in Figure~\ref{fig:optimal_control}\subref{fig:dual} is the only dual variable subject to the projection that strictly stays non-negative.
Once all the variables are determined, the control inputs can be implemented to attain the system evolution with the minimal aggregate control effort and system states (in the sense of 1-norm), as depicted in Figure~\ref{fig:optimal_control}\subref{fig:evolution}. This is consistent with the optimal solution acquired from any centralized linear programming solver:
\begin{small}
\begin{equation*}
\begin{aligned}
& u_\star(0) = \begin{bmatrix}
     0.8190\\
     0
\end{bmatrix}, \
u_\star(1) = \begin{bmatrix}
     -5.7410\\
     0
\end{bmatrix}, \\
& x_\star(1) = \begin{bmatrix}
      7.8286 \\
    6.8000
\end{bmatrix}, \
x_\star(2) = \begin{bmatrix}
           0 \\
    2.0000
\end{bmatrix}.
\end{aligned}
\end{equation*}
\end{small}%


\section{Conclusion}\label{sec:conclusion}

This paper proposes an observable certificate that directly establishes connection between the invariant set and the equilibrium set for saddle flow dynamics of a convex-concave function such that the asymptotic convergence to a saddle point can be guaranteed. The certificate is rooted in observability, and we identify the existence of such observable certificates in the presence of conventional conditions, e.g., strict convexity-concavity and proximal regularization, as well as the proposed separable regularization method. Therefore, our observable certificate is a weaker condition, and it further generalizes to situations with projections on the vector field of saddle flow dynamics.
Besides, the novel separable regularization method that builds on our observable certificate requires only minimal convexity-concavity to establish convergence and enjoys a separable structure for potential distributed implementation, which is demonstrated through an application to distributed linear programming.






\bibliographystyle{IEEEtran} 
\bibliography{refs.bib} 

\end{document}